\newcommand{\fN}{\mathfrak{N}}
\newcommand{\oR}{\overline{R}}
\newcommand{\Xbar}{\overline{X}}
\newcommand{\Ybar}{\overline{Y}}
\newcommand{\arr}{{\; \rightarrow \;}}
\def\CC{\mathbb{C}}
\def\FF{\mathbb{F}}
\def\GG{\mathbb{G}}
\def\QQ{\mathbb{Q}}
\def\ZZ{\mathbb{Z}}
\newcommand{\gera}{{\frak{a}}}
\newcommand{\gerd}{{\frak{d}}}
\newcommand{\gerl}{{\frak{l}}}
\newcommand{\germ}{{\frak{m}}}
\newcommand{\uA}{{\underline{A}}}
\newcommand{\calO}{{\mathcal{O}}}
\newlength{\ownl}
\newcommand{\Frob}{{\operatorname{Frob}}}
\newcommand{\Fr}{{\operatorname{Fr}}}
\newcommand{\Gal}{{\operatorname{Gal}\,}}
\newcommand{\pr}{{\operatorname{pr}\,}}
\newcommand{\tr}{{\operatorname{tr}\,}}
\newcommand{\GL}{\operatorname{GL}}
\newcommand{\PGL}{\operatorname{PGL}}
\newcommand{\PSL}{\operatorname{PSL}}
\newcommand{\C}{{\mathbb{C}}}
\newcommand{\F}{{\mathbb{F}}}
\newcommand{\Q}{{\mathbb{Q}}}
\newcommand{\Z}{{\mathbb{Z}}}
\newcommand{\cF}{\mathcal{F}}
\newcommand{\cG}{\mathcal{G}}
\newcommand{\cO}{\mathcal{O}}
\newcommand{\cS}{\mathcal{S}}
\newcommand{\tS}{\widetilde{{S}}}
\newcommand{\tv}{{\widetilde{{v}}}}
\newcommand{\varepsilonbar  }{\overline{\varepsilon}}
 \newcommand{\rhobar   }{{\overline{\rho}}}
\newcommand{\gammat   }{\widetilde{\gamma}}
\newcommand{\proj}{{\operatorname{proj}}}
 \newcommand{\p}{\mathfrak{p}}
\newcommand{\Qp}{\Q_p}
\newcommand{\Qpbar}{\overline{\Q}_p}
\newcommand{\Fpbar}{\overline{\F}_p}
\newtheorem{ithm}{Theorem}
\newtheorem{thm}{Theorem}[subsection]
\newtheorem{cor}[thm]{Corollary}
 \newtheorem{lemma}[thm]{Lemma}
\newtheorem{prop}[thm]{Proposition}
\newtheorem{conj}[thm]{Conjecture} \theoremstyle{definition}
 \theoremstyle{definition}
 \theoremstyle{remark}
\numberwithin{equation}{subsection}
\theoremstyle{definition}
\begin{document}
\title[Companion Forms in Parallel Weight One]{Companion Forms in Parallel Weight One}

\author{Toby Gee} \email{toby.gee@imperial.ac.uk} \address{Department of
  Mathematics, Imperial College London} \author{Payman
  Kassaei}\email{payman.kassaei@kcl.ac.uk}\address{Department of
  Mathematics, King's College London}  \subjclass[2000]{11F33.}
\begin{abstract}Let $p>2$ be prime, and let $F$ be a totally real
  field in which $p$ is unramified. We give a sufficient criterion for
  a mod $p$ Galois representation to arise from a mod $p$ Hilbert
  modular form of parallel weight one, by proving a ``companion
  forms'' theorem in this case. The techniques used are a mixture of
  modularity lifting theorems and geometric methods. As an
  application, we show that Serre's conjecture for $F$ implies Artin's
  conjecture for totally odd two-dimensional representations over $F$.
\end{abstract}
\maketitle\section{Introduction}The weight part of Serre's conjecture has been
  much-studied over the last two decades, and while the original
  problem has been resolved, a great deal remains to be proved in more general
  settings. In the present paper we address the question of the weight
  part of Serre's conjecture for totally real fields. Here one has the
  Buzzard--Diamond--Jarvis conjecture \cite{bib:BDJ} and its various
  generalisations, much of which has now been established
  (cf. \cite{gee08serrewts}, \cite{blgg11-serre-weights-for-U2}). One
  case that has not (so far as we are aware) been considered at all
  over totally real fields is the case of forms of (partial) weight
  one. This case is markedly different to the case of higher weights,
  for the simple reason that mod $p$ modular forms of weight one
  cannot necessarily be lifted to characteristic zero modular forms of
  weight one; as the methods of \cite{gee08serrewts} and
  \cite{blgg11-serre-weights-for-U2} are centered around modularity
  lifting theorems, and in particular depend on the use of lifts to
  characteristic zero modular forms, they cannot immediately say
  anything about the weight one case.

In this paper we generalise a result of Gross \cite{MR1074305}, and
prove a companion forms theorem for Hilbert modular forms of parallel
weight one in the unramified $p$-distinguished case. To explain what this means,
and how it (mostly) resolves the weight one part of Serre's conjecture
for totally real fields, we return to the case of classical modular
forms. Serre's original formulation \cite{MR885783} of his
conjecture only considered mod $p$ modular forms which lift to
characteristic zero, and in particular ignored the weight one
case. However, Serre later observed that one could further refine his
conjecture by using Katz's definition \cite{MR0447119} of mod $p$
modular forms, and thus include weight one forms. He then conjectured
that a modular Galois representation should arise from a weight one
form (of level prime to $p$) if and only if the Galois representation
is unramified at $p$. The harder direction is to prove that the Galois
representation being unramified implies that there is a weight one
form; this was proved by Gross \cite{MR1074305}, under the further
hypothesis that the eigenvalues of a Frobenius element at $p$ are
distinct (i.e. we are in the $p$-distinguished case). It is this
result that we generalise in this paper, proving the following
theorem.

\begin{ithm}\label{thm: main result, introduction version}
  Let $p>2$ be prime, let $F$ be a totally real field in which $p$ is
  unramified, and let $\rhobar:G_F\to\GL_2(\Fpbar)$ be an irreducible
  modular representation such that $\rhobar|_{G_{F(\zeta_p)}}$ is
  irreducible.   If $p=3$ (respectively $p=5$), assume further that the projective image of
  $\rhobar(G_{F(\zeta_p)})$ is not conjugate to $\PSL_2(\F_3)$ (respectively $\PSL_2(\F_5)$ or
  $\PGL_2(\F_5)$).

Suppose that
  for each place $v|p$, $\rhobar|_{G_{F_v}}$ is unramified, and that
  the eigenvalues of $\rhobar(\Frob_v)$ are distinct.

 Then there is a mod $p$ Hilbert modular form $f$ of
  parallel weight $1$ and level prime to $p$ such that $\rhobar_f\cong\rhobar$.
  \end{ithm}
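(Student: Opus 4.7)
The plan is to extend Gross's classical argument \cite{MR1074305} to the totally real setting by combining the weight part of Serre's conjecture in higher weight with a geometric descent on the Hilbert modular variety. Write $S_p$ for the set of places of $F$ above $p$ and $\Sigma$ for the set of embeddings $F \hookrightarrow \Qpbar$. Under the hypotheses, at each $v \in S_p$ one has $\rhobar|_{G_{F_v}} \cong \alpha_v \oplus \beta_v$ with $\alpha_v \neq \beta_v$ unramified characters. By the established cases of the Buzzard--Diamond--Jarvis conjecture cited in \cite{gee08serrewts} and \cite{blgg11-serre-weights-for-U2}, the Serre weight set of $\rhobar$ is as large as possible: for each weight $(k_\sigma)_{\sigma \in \Sigma}$ with $k_\sigma \in \{2,p+1\}$, the representation $\rhobar$ arises from a mod $p$ Hilbert eigenform of that weight. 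In the ordinary, $p$-distinguished situation, each such form admits two distinct ordinary $p$-stabilizations (one for each unit-root eigenvalue at each $v$), and each lifts to a classical characteristic-zero ordinary Hilbert eigenform by Hida theory. This produces a full family of higher-weight ``companion'' eigensystems for $\rhobar$.

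The next step is to relate these higher-weight companions to a putative parallel-weight-one form via the partial theta operators $\theta_\sigma$ (which raise the weight by $p-1$ at $\sigma$) and the partial Hasse invariants $h_\sigma$ (sections vanishing on the non-$\sigma$-ordinary Goren--Oort stratum). If the desired mod $p$ eigenform $f$ of parallel weight one with $\rhobar_f \cong \rhobar$ existed, then successive applications of the $\theta_\sigma$ would produce eigenforms in all weights $(k_\sigma)$ with $k_\sigma \in \{1,p\}$ sharing $\rhobar$, and multiplication by appropriate $h_\sigma$ would identify the resulting Hecke systems with those of the higher-weight companions just constructed. The plan is to reverse this chain: starting from a chosen higher-weight companion, divide formally by the appropriate partial Hasse invariants to produce $f$. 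This division is unobstructed on the ordinary locus but runs into vanishing issues precisely along each non-$\sigma$-ordinary stratum.

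The geometric heart of the proof is carrying out this descent. On the ordinary locus, division by the total Hasse invariant yields a mod $p$ weight-one eigenform on an open subvariety. The full family of higher-weight companions, indexed by choices of $\alpha_v$ or $\beta_v$ at each $v$, provides weight-one sections on open neighbourhoods of each Goren--Oort stratum; the $p$-distinguished hypothesis ensures that sections coming from different eigenvalue choices agree on overlaps, so that they glue into a global mod $p$ form. One then extends this form across the non-ordinary strata by analytic continuation of overconvergent Hilbert modular forms, in the spirit of Kassaei's classicality theorems, to obtain a genuine classical section. The main obstacle is precisely this extension across each non-ordinary stratum: in higher weight one would bypass such geometric difficulties by lifting to characteristic zero and applying a modularity lifting theorem, but in weight one that route is blocked because mod $p$ weight-one forms need not lift to characteristic-zero weight-one forms. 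All of the work therefore has to be done within the mod $p$ geometry of the Hilbert modular variety, orchestrating the gluing using the full family of higher-weight data and the distinctness of the Frobenius eigenvalues at every $v|p$.
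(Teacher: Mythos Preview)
Your proposal outlines a plausible-sounding strategy but leaves the two hardest steps as bare assertions, so it is not a proof. First, the gluing claim is not right as stated: the weight-one sections you obtain by dividing different higher-weight companions by (partial) Hasse invariants on the ordinary locus do \emph{not} agree on overlaps, because by construction they have different $T_v$-eigenvalues for $v\mid p$. The $p$-distinguishedness hypothesis is exactly what makes these sections pairwise distinct, not equal. Second, and more seriously, the step ``extend across the non-ordinary strata by analytic continuation of overconvergent Hilbert modular forms, in the spirit of Kassaei's classicality theorems'' is doing all of the work and is given no mechanism. Classicality theorems of that type run in characteristic zero and in the opposite direction; there is no off-the-shelf statement that extends a mod~$p$ section from the ordinary locus across the Goren--Oort stratification without further input. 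You have correctly located the obstacle, but you have not removed it.

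The paper's argument bypasses both issues by a more direct and essentially algebraic route. Using modularity lifting (in the BLGGT framework, not Hida theory or the BDJ weight results), one produces for each of the $2^{|S_p|}$ choices of Frobenius eigenvalue a mod~$p$ eigenform $f_I$ of \emph{parallel weight~$p$} and level prime to~$p$ with the prescribed $T_v$-eigenvalues $\gamma_{v}$. One then forms the single global linear combination
\[
f=\sum_{I\subset S_p}(-1)^{|I|}\gamma_I\, f_I,\qquad g=\sum_{I\subset S_p}(-1)^{|I|} f_I,
\]
and by a short $q$-expansion calculation at the cusps of the Iwahori-level moduli space shows that $\pi_1^\ast f-\mathrm{pr}^\ast\pi_2^\ast g$ vanishes on the relevant component of the special fibre. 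Pulling back along the Frobenius section and using that the natural projection $A/\ker(\mathrm{Frob}_A)\to A$ is Verschiebung, one deduces the identity $V(\bar f)=V(\mathbf h)\,\bar g$ on $\Xbar$, where $\mathbf h$ is the \emph{total} Hasse invariant of parallel weight $p-1$ and $V$ is Frobenius pullback on sections. Comparing Weil divisors then forces $\bar f$ to be globally divisible by $\mathbf h$, and $\bar f/\mathbf h$ is the desired parallel-weight-one form. No partial Hasse invariants, no $\theta$-operators, no stratification, and no analytic continuation enter; the role your gluing was meant to play is absorbed into choosing the alternating linear combination at the outset, and the $p$-distinguishedness hypothesis is used only to guarantee that this combination is not identically zero.
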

  (See Theorem \ref{thm: main result}, and see the body of the paper
  for any unfamiliar notation or terminology.) The condition on
  $\rhobar|_{G_{F(\zeta_p)}}$ is mild, and the only other hypothesis
  which is not expected to be necessary (other than that $p$ is
  unramified in $F$, which we assume for our geometric arguments) is
  that the eigenvalues of $\rhobar(\Frob_v)$ are distinct for all
  $v|p$. This condition appears to be essential to our method, as we
  explain below.

Our method of proof is a combination of modularity lifting theorem
  techniques and geometric methods.

  The first part of the argument, using modularity lifting theorems
  to produce Hilbert modular forms of parallel weight $p$, was carried
  out in \cite{gee051} under some additional hypotheses (in
  particular, it was assumed that $\rhobar$ arose from an ordinary
  Hilbert modular form), and in Section \ref{sec: BLGG} below we use
  the techniques of \cite{BLGGT} (which involve the use of automorphy
  lifting theorems for rank 4 unitary groups) to remove these
  hypotheses. This gives us $2^n$ Hilbert modular forms of parallel
  weight $p$ and level prime to $p$, where there are $n$ places of $F$
  above $p$, corresponding to the different possible choices of
  Frobenius eigenvalues at places above $p$.   In Section \ref{section:
    weight one} we take a suitable linear combination of these forms,
  and show that it is divisible by the Hasse invariant of  parallel weight $p-1$, by explicitly calculating  the $p$-th power of the quotient.  It is easy to show that the quotient is the sought-after Hilbert modular form of parallel  weight one.  If we do not assume that $\rhobar$ has
  distinct Frobenius eigenvalues at each place dividing $p$, the
  weight one form we obtain in this manner is actually zero.

In Section \ref{sec: Artin} we give an application of our
main theorem to Artin's conjecture, generalising the results (and
arguments) of \cite{MR1434905} to prove the following result, which
shows that the weak form of Serre's conjecture for totally real fields
implies the strong form of Artin's conjecture for totally odd
two-dimensional representations.

\begin{ithm}
  Let $F$ be a totally real field. Assume that every irreducible,
  continuous and totally odd representation
  $\rhobar:G_F\to\GL_2(\Fpbar)$ is modular, for every prime $p$. Then
  every irreducible, continuous and totally odd representation
  $\rho:G_F\to\GL_2(\C)$ is modular.
\end{ithm}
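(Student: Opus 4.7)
The strategy is an adaptation of Buzzard--Taylor \cite{MR1434905} to totally real fields, with Theorem A playing the role of Gross's companion forms theorem. Let $\rho: G_F \to \GL_2(\C)$ be continuous, irreducible, and totally odd. Since $\rho$ has finite image, after conjugation it takes values in $\GL_2(\CO_L)$ for some number field $L$, and is unramified outside a finite set $S$ of places of $F$. A Chebotarev density argument produces a rational prime $p$ and a place $\lambda \mid p$ of $L$ satisfying: (i) $p > 2$ is unramified in $F$, with no place of $S$ above $p$ (so $\rho$ is unramified at every $v \mid p$); (ii) the residual representation $\rhobar_\lambda$ is irreducible, $\rhobar_\lambda|_{G_{F(\zeta_p)}}$ is irreducible, and the projective-image condition of Theorem A holds when $p \in \{3, 5\}$; (iii) for every $v \mid p$, the two eigenvalues of $\rho(\Frob_v)$ (roots of unity in $\barL$) reduce to distinct elements of $\Fpbar$.

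By the Serre modularity assumption, $\rhobar_\lambda$ is modular; Theorem A then yields a mod $p$ Hilbert modular eigenform $\bar{f}$ of parallel weight one and level prime to $p$ with $\rhobar_{\bar{f}} \cong \rhobar_\lambda$. The central analytic step, a totally real analogue of the doubling argument of \cite{MR1434905}, is to lift $\bar{f}$ to a classical characteristic zero Hilbert modular eigenform $f_\lambda$ of parallel weight one and level prime to $p$: at each place $v \mid p$ the two distinct $\Frob_v$-eigenvalues give two $U_v$-stabilizations of $\bar{f}$ to ordinary mod $p$ eigenforms of parallel weight $p$; Hida theory lifts each of the $2^n$ resulting ordinary systems to a $p$-adic ordinary eigenform in characteristic zero, and a classicality theorem for overconvergent parallel weight one Hilbert modular eigenforms upgrades one of these to the classical $f_\lambda$. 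The attached Galois representation $\rho_{f_\lambda}$ is crystalline with Hodge--Tate weights $(0,0)$ at each $v \mid p$, hence unramified there, and is therefore an Artin representation lifting $\rhobar_\lambda$.

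To identify $\rho_{f_\lambda}$ with $\rho$ itself (rather than some finite-order Artin twist), the construction is repeated for a second prime $(p',\lambda')$ with $p' \neq p$, producing a second Artin lift $\rho_{f_{\lambda'}}$; comparing these two representations with $\rho$ via Brauer--Nesbitt and strong multiplicity one for Hilbert modular forms forces $\rho_{f_\lambda} \cong \rho$, so that $\rho$ is modular in the classical sense. The main obstacle is the classicality step for overconvergent parallel weight one Hilbert modular eigenforms: the distinct-Frobenius-eigenvalue hypothesis at each $v \mid p$ is used crucially here, as it provides the two ordinary stabilizations that drive the doubling argument and rule out the overconvergent lift from being purely $p$-adic.
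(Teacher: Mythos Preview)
Your proposal diverges substantially from the paper's argument, and it has two genuine gaps.

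\textbf{The lifting step.} You invoke, as the ``central analytic step'', a classicality theorem for overconvergent parallel weight one Hilbert eigenforms (via Hida theory and a doubling argument). This is a deep result in its own right---essentially the Buzzard--Taylor method transported to the Hilbert setting---and it is \emph{not} an input available in the paper. Indeed the paper explicitly flags modularity lifting in parallel weight one as future work (Calegari--Geraghty). So as written, your argument rests on a black box at least as hard as the theorem you are proving. The paper sidesteps this entirely: for all but finitely many $p$ the $\Z$-module $H^1(X,\underline{\omega})$ is $p$-torsion free, so the reduction map $H^0(X,\underline{\omega})\to H^0(\Xbar,\underline{\omega})$ is surjective, and the Deligne--Serre lemma lifts the mod $p$ weight one eigenform produced by Theorem~A to a characteristic zero weight one eigenform of the \emph{same} level $N$. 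No overconvergent forms, no Hida theory, no classicality theorem.

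\textbf{The identification step.} Even granting the lift, your final paragraph does not work. Knowing that $\rho_{f_\lambda}\equiv\rho\pmod{\lambda}$ and $\rho_{f_{\lambda'}}\equiv\rho\pmod{\lambda'}$ for two primes does not force $\rho_{f_\lambda}\cong\rho$: there can be many distinct Artin representations congruent to $\rho$ modulo a fixed prime, and Brauer--Nesbitt plus strong multiplicity one do not bridge this. The paper's mechanism is a pigeonhole: Theorem~A bounds the level $N$ uniformly in terms of the Artin conductor of $\rho$, so all the characteristic zero lifts live in the finite-dimensional space $S_1(\Gamma_1(N),\C)$, which contains only finitely many eigenforms. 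Running the argument for \emph{infinitely many} $p$ (a Chebotarev argument produces infinitely many $p$ with $\proj\rho(\Frob_v)\neq 1$ for all $v\mid p$), some single eigenform $f$ must recur for infinitely many $p$, and then $\rho_f$ and $\rho$ agree modulo infinitely many primes, hence are equal.

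In short: replace ``one (or two) carefully chosen $p$ plus hard $p$-adic classicality'' by ``infinitely many $p$, Deligne--Serre lifting for cofinitely many $p$, and pigeonhole in a fixed finite-dimensional space''.
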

(See Theorem \ref{thm: Serre implies Artin}.)  Finally, we remark that
it is possible that our results can be applied to Artin's conjecture
more directly (that is, without assuming Serre's conjecture), as an
input to modularity lifting theorems in parallel weight one; see the
forthcoming work of Calegari--Geraghty for some results in this
direction. Additionally, it would be of interest to generalise our
results to forms of partial weight one; the geometric techniques we
use in this paper amount to determining the intersection of the
kernels of the $\Theta$-operators of \cite{AG}, and it is possible
that a determination of the kernels of the individual
$\Theta$-operators could shed some light on this.

We are grateful to Shu Sasaki for suggesting that our results could be
used to generalise the arguments of \cite{MR1434905} to totally real
fields. We are also grateful to Kevin Buzzard for several helpful
conversations, and to Frank Calegari for asking a question which led
to our writing this paper.

\subsection{Notation} If $M$ is a field, we let $\overline{M}$ denote
an algebraic closure of $M$, and we let $G_M:=\Gal(\overline{M}/M)$
denote its absolute Galois group. Let $p$ be a prime number, and let
$\varepsilon$ denote the $p$-adic cyclotomic character; our choice of
convention for Hodge--Tate weights is that $\varepsilon$ has all
Hodge--Tate weights equal to $1$. Let $F$ be a totally real field and
$f$ a cuspidal Hilbert modular eigenform of parallel weight $k$. If
$v$ is a finite place of $F$ which is coprime to the level of $f$,
then we have, in particular, the usual Hecke operator $T_v$
corresponding to the double coset \[\GL_2(\cO_{F_v})
    \begin{pmatrix}
      \varpi_v&0\\0&1
    \end{pmatrix}\GL_2(\cO_{F_v}),\]where $\varpi_v$ is a uniformiser of $\cO_{F_v}$,
    the ring of integers of $F_v$. 

    There is a Galois representation $\rho_f:G_F\to\GL_2(\Qpbar)$
    associated to $f$; we adopt the convention that if $v\nmid p$ is
    as above, and $\Frob_v$ is an \emph{arithmetic} Frobenius element
    of $G_{F_v}$ then $\tr\rho_f(\Frob_v)$ is the $T_v$-eigenvalue of
    $f$, so that in particular the determinant of $\rho_f$ is a finite
    order character times $\varepsilon^{k-1}$.

We say that
    $\rhobar:G_F\to\GL_2(\Fpbar)$ is \emph{modular} if it arises as
    the reduction mod $p$ of the Galois representation
    $\rho_f:G_F\to\GL_2(\Qpbar)$ for some $f$.

\section{Modularity lifting in weight $p$}\label{sec: BLGG}\subsection{}In this section we apply the
modularity lifting techniques first used in \cite{gee051} to produce
companion forms in (parallel) weight $p$. We make use of the
techniques of \cite{BLGGT} in order to weaken the  hypotheses
(for example, to avoid the necessity of an assumption of
ordinarity). In this section, we do not assume that $p$ is unramified
in $F$. Note that the definition of a mod $p$ modular form is recalled
in Section~\ref{section: weight one} below.

\begin{thm}\label{thm: lifting to weight p by blgg}
  Let $p>2$ be prime, let $F$ be a totally real field, and let
  $\rhobar:G_F\to\GL_2(\Fpbar)$ be an irreducible modular
  representation such that $\rhobar|_{G_{F(\zeta_p)}}$ is irreducible.
  If $p=3$ (respectively $p=5$), assume further that the projective image of
  $\rhobar(G_{F(\zeta_p)})$ is not conjugate to $\PSL_2(\F_3)$ (respectively $\PSL_2(\F_5)$ or
  $\PGL_2(\F_5)$).

Suppose that
  for each place $v|p$, $\rhobar|_{G_{F_v}}$ is unramified, and in
  fact suppose that \[\rhobar|_{G_{F_v}}\cong
  \begin{pmatrix}
    \lambda_{\alpha_{1,v}} &0\\0&\lambda_{\alpha_{2,v}}
  \end{pmatrix}\]where $\lambda_x$ is the unramified character sending
  an arithmetic Frobenius element to $x$. For each place $v|p$, let
  $\gamma_v$ be a choice of one of $\alpha_{1,v}$,
  $\alpha_{2,v}$. Let $N$ be an integer coprime to $p$ and divisible
  by the Artin conductor of $\rhobar$. Then there is a mod $p$ Hilbert modular eigenform $f$ of
  parallel weight $p$ such that:
  \begin{itemize}
  \item $f$ has level $\Gamma_1(N)$; in particular, $f$ has level prime
    to $p$.
  \item $\rhobar_f\cong\rhobar$.
  \item $T_v f =\gamma_vf$ for each place $v|p$.
  \end{itemize}

\end{thm}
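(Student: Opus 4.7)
The approach follows the standard template for companion-forms theorems: construct a characteristic-zero Galois lift $\rho$ of $\rhobar$ with prescribed local behavior at $p$, prove $\rho$ is automorphic using a modularity lifting theorem, and reduce the resulting classical form modulo $p$. Parallel weight $p$ corresponds to Hodge--Tate weights $\{0,p-1\}$, and the plan is to arrange for $\rho$ to be crystalline and ordinary at each $v|p$ with the unit Frobenius eigenvalue equal to the Teichm\"uller lift $\widetilde{\gamma}_v$ of $\gamma_v$, so that after reducing mod $p$ the $T_v$-eigenvalue of the resulting form will automatically be $\gamma_v$.

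At each $v|p$, I would take the local lift of the form
\[
\rho_v \cong \lambda_{\widetilde{\gamma}_v}\oplus\bigl(\lambda_{\widetilde{\delta}_v}\,\varepsilon^{p-1}\bigr),
\]
with $\widetilde{\delta}_v$ the Teichm\"uller lift of the other Frobenius eigenvalue. Because $\overline{\varepsilon}$ takes values in $\F_p^\times$, the character $\overline{\varepsilon}^{p-1}$ is identically trivial, so the Fontaine--Laffaille reduction of $\rho_v$ agrees with $\rhobar|_{G_{F_v}}$. At places $v\nmid p$ dividing $N$ I would take any minimal lift of $\rhobar|_{G_{F_v}}$. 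A standard Ramakrishna--Taylor global lifting argument (allowing, if necessary, auxiliary Taylor--Wiles primes, and using Taylor's trick to arrange the determinant to be totally odd) then produces a global lift $\rho:G_F\to\GL_2(\overline{\Z}_p)$ of $\rhobar$ with these prescribed local properties and conductor dividing $N$ away from $p$.

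To promote $\rho$ to automorphy, I would invoke the modularity lifting theorems of \cite{BLGGT}. By construction $\rho$ is ordinary, hence potentially diagonalizable, at each place above $p$; and the assumptions on $\rhobar|_{G_{F(\zeta_p)}}$ (together with the excluded small-image cases for $p=3,5$) supply the adequacy hypothesis needed for the Taylor--Wiles machinery. The automorphy lifting theorem then yields a cuspidal Hilbert eigenform $F$ of parallel weight $p$ (from the Hodge--Tate weights), level $\Gamma_1(N)$ (since $\rho$ is crystalline at $p$ with unramified reduction and its conductor away from $p$ divides $N$), and satisfying $\rho_F\cong\rho$. The $T_v$-eigenvalue of $F$ at each $v|p$ is the trace $\widetilde{\gamma}_v + \widetilde{\delta}_v\,p^{(p-1)f_v}\equiv\gamma_v\pmod p$ of crystalline Frobenius. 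Reducing $F$ modulo $p$ then produces the desired mod-$p$ form $f$.

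The principal obstacle is the automorphy lifting step. In the earlier treatment \cite{gee051} one needed $\rhobar$ itself to arise from an ordinary Hilbert modular form in order to use an ordinary modularity lifting theorem; the technical novelty that allows us to remove this restriction is the machinery of \cite{BLGGT}, whose automorphy lifting theorems (proved via rank-$4$ unitary groups, together with Moret-Bailly-style potential automorphy and a change-of-weight argument) propagate automorphy to $\rho$ from the assumed modularity of $\rhobar$ without any prior ordinarity hypothesis. Everything else in the proof is either local calculation or standard global deformation theory.
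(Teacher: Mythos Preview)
Your overall strategy---produce a characteristic-zero ordinary automorphic lift with the prescribed unit Frobenius eigenvalue at each $v\mid p$, then reduce mod $p$---matches the paper's, and the local shape you write down at $v\mid p$ is exactly the right one. The difference is in how the lift is obtained. You propose a two-step ``construct, then promote'' route: first manufacture a global $\rho$ with the desired local conditions via a Ramakrishna--Taylor argument, and afterward apply a BLGGT-style modularity lifting theorem to $\rho$. The paper does not separate these steps. It sets up a global deformation ring $R^{\chi,\gamma}$ whose local conditions are precisely the ones you want (ordinary with specified unit eigenvalue at $v\mid p$, minimal at $v\nmid p$), passes to a quadratic CM extension and the associated $\cG_2$-valued ring $R_{\cS}$, and then uses the $R=T$ theorems of Thorne together with the existence-of-automorphic-point result from \cite{blgg11-serre-weights-for-U2} to prove $R_{\cS}$, hence $R^{\chi,\gamma}$, is finite over $\cO$. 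A separate lower bound $\dim R^{\chi,\gamma}\ge 1$ then forces $\Qpbar$-points to exist, and these are automorphic by construction. In other words, the modularity lifting theorem is used to \emph{produce} the lift, not merely to certify one you already built.

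The reason this matters is level control. A genuine Ramakrishna-style lifting argument kills the dual Selmer group by allowing ramification at a finite set of auxiliary primes, so the resulting $\rho$ would typically \emph{not} have conductor dividing $N$ away from $p$; you would need an extra level-lowering step after proving automorphy, which your sketch does not supply. (Invoking ``auxiliary Taylor--Wiles primes'' does not resolve this: those live inside the patching argument and never appear in the conductor of the final lift, but once you are patching you are already proving $R=T$, not pre-constructing a single lift.) The paper's Khare--Wintenberger-style approach sidesteps the issue entirely: since $R^{\chi,\gamma}$ is defined with the exact conductor condition baked in, any $\Qpbar$-point of it automatically has level $\Gamma_1(N)$.
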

\begin{proof} It suffices to prove that there is a (characteristic zero) Hilbert
  modular eigenform $\cF$ of parallel weight $p$ such that $\cF$ has level
$N$, the Galois representation $\rho_{\cF}$ associated to
  $\cF$ satisfies $\rhobar_{\cF}\cong\rhobar$, and for each place
  $v|p$ we have $T_v\cF=\gammat_v \cF$ for some lift $\gammat_v$ of
  $\gamma_v$. (We remark that in the argument below, we will feel free
  to let $\gammat_v$ denote \emph{any} lift of $\gamma_v$, rather than
  a fixed lift.) By local-global compatibility at places dividing $p$
  (cf. Theorem 4.3 of \cite{kisinpst}), it is enough to find a lift
  $\rho$ of $\rhobar$ which is automorphic, which is minimally
  ramified outside $p$, and has the further property that for each place
  $v|p$ we have \[\rho|_{G_{F_v}}\cong
  \begin{pmatrix}
    \varepsilon^{p-1}\lambda_{x_v} & *\\ 0 & \lambda_{\gammat_v}
  \end{pmatrix}\]for some $x_v$ and some $\gammat_v$ (such a
  representation is automatically crystalline with Hodge--Tate weights
  $0$, $p-1$ at each place $v|p$ and thus corresponds to a Hilbert
  modular form of parallel weight $p$ and level prime to $p$).

  The existence of such a representation $r$ is a straightforward
  application of the results of \cite{blgg11-serre-weights-for-U2}, as
  we now explain. This argument is very similar to the proof of
  Proposition 6.1.3 of \cite{blggord}. Firstly, choose a quadratic
  imaginary CM extension $F_1/F$ which splits at all places dividing
  $p$ and all places where $\rhobar$ is ramified, and which is
  linearly disjoint from $\overline{F}^{\ker\rhobar}(\zeta_p)$ over
  $F$. Let $S$ denote a finite set of places of $F$, consisting of the
  infinite places, and the union of set of places of $F$ at which
  $\rhobar$ is ramified and the places which divide $p$. From now on
  we will consider $\rhobar$ as a representation of $G_{F,S}$, the
  Galois group of the maximal extension of $F$ unramified outside of
  $S$. Let $\chi$ be the Teichm\"uller lift of
  $\varepsilonbar^{1-p}\det\rhobar$.

  Fix a finite extension $E/\Qp$ with ring of integers $\cO$ and
  residue field $\F$ such that $\rhobar$ is valued in $\GL_2(\F)$. For
  each finite place $v$ of $F$, let $R^\chi_{G_{F_v}}$ denote the
  universal $\cO$-lifting ring for lifts of $\rhobar|_{G_{F_v}}$ of
  determinant $\chi\varepsilon^{p-1}$. By (for example) Lemma 3.1.8 of \cite{GG}, for
  each place $v|p$ of $F$ there is a quotient
  $R_{G_{F_v}}^{\chi,\gamma}$ of $R_{G_{F_v}}^\chi$ whose
  $\Qpbar$-points correspond precisely to those lifts of
  $\rhobar|_{G_{F_v}}$ which are conjugate to a representation of the
  form \[\begin{pmatrix} \varepsilon^{p-1}\chi/\lambda_{\gammat_v} & *\\
    0 & \lambda_{\gammat_v}
  \end{pmatrix}\] for some $\gammat_v$ lifting $\gamma_v$. For each
  finite place $v\in S$ with $v\nmid p$, let $\oR_{G_{F_v}}^\chi$ be a
  quotient of $R_{G_{F_v}}^\chi$ corresponding to an irreducible
  component of $R_{G_{F_v}}[1/p]$, the points of which correspond to
  lifts of $\rhobar|_{G_{F_v}}$ with the same Artin conductor as
  $\rhobar|_{G_{F_v}}$. Let $R^{\chi,\gamma}$ denote the universal
  deformation ring for deformations of $\rhobar$ of determinant
  $\chi\varepsilon^{p-1}$, which have the additional property that for
  each place $v|p$, the deformation corresponds to a point of
  $R_{G_{F_v}}^{\chi,\gamma}$, and for each finite place $v\in S$,
  $v\nmid p$, it corresponds to a point of $\oR_{G_{F_v}}^{\chi}$. In
  order to construct the representation $r$ that we seek, it is enough
  to find a $\Qpbar$-point of $R^{\chi,\gamma}$ that is
  automorphic. We will do this by showing that $R^{\chi,\gamma}$ is a
  finite $\cO$-algebra of dimension at least one (so that it has
  $\Qpbar$-points), and that all its $\Qpbar$-points are automorphic.

  We can and do extend $\rhobar|_{G_{F_1}}$ to a representation
  $\rhobar:G_F\to\cG_2(\F)$, where $\cG_2$ is the group scheme
  introduced in Section 2.1 of \cite{cht} (cf. Section 3.1.1 of
  \cite{blggord}). In the notation of Section 2.3 of \cite{cht}, we
  let $\tS$ be a set of places of $F_1$ consisting of exactly one
  place $\tv$ above each place $v$ of $S$, and we let $\cS$ denote the
  deformation
  problem \[(F_1/F,S,\tS,\cO,\rhobar,\varepsilon^{p-1}\chi,\{\oR^\chi_{G_{F_{1,\tv}}}\}_{v\in
    S, v\nmid p},\{R_{G_{F_{1,\tv}}}^{\chi,\gamma}\}_{v|p}).\] Let
  $R_\cS$ denote the corresponding universal deformation ring. Exactly
  as in section 7.4 of \cite{GG}, the process of ``restriction to
  $G_{F_1}$ and extension to $\cG_2$'' makes $R^{\chi,\gamma}$ a
  finite $R_\cS$-module in a natural way. By Proposition 3.1.4 of
  \cite{gee061} we have $\dim R^{\chi,\gamma}\ge 1$, so that (by
  cyclic base change for $\GL_2$) it suffices to show that $R_\cS$ is
  finite over $\cO$, and that all its $\Qpbar$-points are automorphic.

  By Proposition A.2.1 of \cite{blgg11-serre-weights-for-U2} and our
  assumptions on $\rhobar|_{G_{F(\zeta_p)}}$,
  $\rhobar(G_{F(\zeta_p)})$ is adequate in the sense of
  \cite{jack}. By Theorems 7.1 and 10.1 of \cite{jack}, it is enough
  to check that $R_\cS$ has an automorphic $\Qpbar$-point. We claim
  that we can do this by applying Theorem A.4.1 of
  \cite{blgg11-serre-weights-for-U2} to $\rhobar$. The only hypotheses
  of {\em loc. cit.} that are not obviously satisfied are those
  pertaining to potential diagonalizability. By Lemma 3.1.1 of
  \cite{blgg11-serre-weights-for-U2}, we may choose a finite solvable
  extension $F'/F$ of CM fields which is linearly disjoint from
  $\overline{F}^{\ker\rhobar}(\zeta_p)$, such that $\rhobar|_{G_{F'}}$
  has an automorphic lift which is potentially diagonalizable at all
  places dividing $p$. All $\Qpbar$-points of each
  $R_{G_{F_v}}^{\chi,\gamma}$ are potentially diagonalizable by Lemma
  1.4.3 of \cite{BLGGT}, so Theorem A.4.1 of
  \cite{blgg11-serre-weights-for-U2} produces a $\Qpbar$-point of
  $R_{\cS}$ which is automorphic upon restriction to $G_{F'}$. Since
  $F'/F$ is solvable, the result follows by solvable base change
  (Lemma 1.4 of \cite{BLGHT}).
\end{proof}

\section{Weight one forms}\label{section: weight one}\subsection{}

Let $p>2$ be a prime number.  Let $F/\mathbb{Q}$ be a totally real
field of degree $d>1$,  $\mathcal{O}_F$ its ring of integers, and
$\gerd_F$ its different ideal.    Let $\mathbb{S}=\{v|p\}$ be the set of all primes lying above $p$. We assume that $p$ is unramified in $F$. Let $N>3$ be an integer prime to $p$.

In this section we make our geometric arguments. We begin by
recalling some standard definitions. Let $K$ be a finite extension of
$\QQ_p$ (which we will assume to be sufficiently large without further
comment), and let $\calO_K$, $\kappa$ denote, respectively, its ring
of integers and its residue field. Let $X/\calO_K$ be the Hilbert
modular scheme representing the functor which associates to an
$\calO_K$-scheme $S$, the set of all  polarized abelian schemes with
real multiplication and $\Gamma_{00}(N)$-structure
$\underline{A}/S=(A/S,\iota,\lambda,\alpha)$ as follows: 
 \begin{itemize}
 \item $A$ is an abelian scheme of relative
dimension $d$ over  $S$;
\item the real multiplication $\iota\colon\mathcal{O}_F \hookrightarrow {\rm End}_S(A)$ is a ring
homomorphism endowing $A$ with an action of $\mathcal{O}_F$;
\item  the map $\lambda$ is a polarization as in \cite{DP};
\item  $\alpha$ is a rigid
$\Gamma_{1}(N)$-level structure,  that is, $\alpha\colon \mu_N
\otimes_{\ZZ} \gerd_F^{-1} \arr A$, an $\calO_F$-equivariant
closed immersion of group schemes. 
 \end{itemize}

 Let $X_K/K$, $\Xbar/\kappa$ respectively denote the generic and the special fibres of $X$.
 Let $\widetilde{X}$ denote a toroidal compactification of $X$. Similarly, we define $\widetilde{X}_K$ and $\widetilde{\Xbar}$.

 Let $Y$ be the scheme representing the functor which associates to an $\calO_K$-scheme $S$, the set of all $(\underline{A}/S,C)=(A/S,\iota,\lambda,\alpha,C)$, where $(A/S,\iota,\lambda,\alpha)$ is as above, and $C$ is an $\calO_F$-invariant
isotropic finite flat subgroup scheme of $A[p]$ of order $p^g$.  Let $\widetilde{Y}$ denote a toroidal compactification of $Y$ obtained using the same choices of polyhedral decompositions as for $\Xbar$.  We introduce the notation $Y_K,\Ybar,\widetilde{Y}_K,\widetilde{\Ybar}$ in the same way as we did for $X$. The ordinary locus in $\Xbar$ is denoted by $\Xbar^{ord}$. It is Zariski dense in $\Xbar$.

There are two finite flat maps
\[
\pi_{1},\pi_2:\widetilde{Y} \arr \widetilde{X},
\]
where $\pi_1$ forgets the subgroup $C$, and $\pi_2$ quotients out by $C$.  We define the Atkin--Lehner involution $w:\widetilde{Y} \rightarrow \widetilde{Y}$ to be the map which sends $(\uA,C)$ to $(\uA/C, A[p]/C)$; it is an automorphism of $\widetilde{Y}$.  We have $\pi_2=\pi_1 \circ w$. We also define the Frobenius section $s:\widetilde{\Xbar} \rightarrow \widetilde{\Ybar}$ which sends $\uA$ to $(\uA,{\rm Ker}(\Frob_{A}))$. Our convention is to use the same notation to denote maps between the various versions of $X,Y$.

Let $\epsilon: \underline{\mathcal A}^{\rm univ} \arr X$ be the universal abelian scheme. Let 
 \[
 \underline{\Omega}=\epsilon_\ast \Omega^1_{\underline{\mathcal A}^{\rm
univ}/X}
\]
be the Hodge bundle on $X$.  Since $p$ is assumed unramified in $F$, $\underline{\Omega}$ is a locally free $(\calO_F \otimes_\ZZ \calO_X)$-module of rank one. We define $\underline{\omega}=\wedge^d \underline{\Omega}$.  The sheaf $\underline{\omega}$ naturally extends to $\widetilde{X}$ as an invertible sheaf. Let $\epsilon^\prime: \underline{\mathcal B}^{\rm univ} \arr Y$ be the universal abelian scheme over $Y$ with the designated subgroup $\mathcal C$. We have
\[
\pi_1^\ast\underline{\omega}=\wedge^d \epsilon_\ast \Omega^1_{{\mathcal B}^{\rm
univ}/Y},
\]
\[
\pi_2^\ast\underline{\omega}=\wedge^d \epsilon_\ast \Omega^1_{     ({\mathcal B}^{\rm
univ}/{\mathcal C})/Y   }.
\]
Let 
\[
\rm{pr}^\ast: \pi_1^\ast \underline{\omega} \rightarrow \pi_2^\ast\underline{\omega}
\]
 denote the pullback under the natural projection  ${\rm{pr}}: \underline{\mathcal B}^{\rm univ} \rightarrow \underline{\mathcal B}^{\rm univ}/{\mathcal C}$. We will often denote $\pi_1^\ast \underline{\omega}$ by $\underline{\omega}$.

Let $R$ be an $\calO_K$-algebra. A (geometric) Hilbert modular form of parallel weight $k \in \ZZ$ and level $\Gamma_1(N)$ defined over $R$ is a section of $\underline{\omega}^k$ over $X\otimes_{\calO_K} R$. Every such section extends to $\widetilde{X} \otimes_{\calO_K} R$ by the Koecher principle. We denote the space of such forms by $M_k(\Gamma_1(N),R)$, and the subspace of cuspforms (those sections vanishing on the cuspidal locus) by $S_k(\Gamma_1(N),R)$. If $R$ is a $\kappa$-algebra, then elements of $M_k(\Gamma_1(N),R)$ are referred to as \emph{mod $p$ Hilbert modular forms}. Every such form is a section of $\underline{\omega}^k$ over $\Xbar\otimes_{\kappa} R$, and extends automatically to $\widetilde{\Xbar} \otimes_{\kappa} R$. 

Given any fractional ideal $\gera$ of $\calO_F$, let $X_\gera$  denote the subscheme of $X$  where the polarization module of the abelian scheme is isomorphic to $(\gera,\gera^+)$ as a module with notion of positivity. Then $X_\gera$ is a connected component of  $X$, and every connected component of $X$ is of the form $X_\gera$ for some $\gera$. The same statement is true for $Y_\gera$, $Y$.   Let 
 \[
Ta_{\gera}=(\underline{\GG_m\! \otimes\! \gerd_F^{-1})/q^{\gera^{-1}}}
\]
denote a cusp on $X_\gera$, where underline indicates the inclusion of standard PEL structure.  Pick $c \in p^{-1}\gera^{-1}-\gera^{-1}$. Then
\[
Ta_{\gera,c}=(Ta_\gera,<\! q^c\!>)
\]
is a cusp on $Y_\gera$, where $<\!q^c\!>$ denotes the $\calO_F$-submodule of $Ta_\gera[p]$ generated by $q^c$.

We now prove our main result, the following theorem.

\begin{thm}\label{thm: main result} Let $p>2$ be a prime which is
  unramified in $F$, a totally real field.  Let
  $\rhobar:G_F\to\GL_2(\Fpbar)$ be an irreducible modular
  representation such that $\rhobar|_{G_{F(\zeta_p)}}$ is irreducible.
  If $p=3$ (respectively $p=5$), assume further that the projective
  image of $\rhobar(G_{F(\zeta_p)})$ is not conjugate to
  $\PSL_2(\F_3)$ (respectively $\PSL_2(\F_5)$ or $\PGL_2(\F_5)$).

Suppose that
  for each prime $v|p$, $\rhobar|_{G_{F_v}}$ is unramified, and that
  the eigenvalues of $\rhobar(\Frob_v)$ are distinct.

 Then there is a mod $p$ Hilbert modular form $h$ of
  parallel weight $1$ and level prime to $p$ such that
  $\rhobar_h\cong\rhobar$. Furthermore, $h$ can be chosen to have
  level bounded in terms of the Artin conductor of $\rhobar$.
  \end{thm}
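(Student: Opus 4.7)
The plan is to execute the strategy sketched in the introduction: produce $2^{|\mathbb{S}|}$ companion eigenforms in parallel weight $p$ via Section~\ref{sec: BLGG}, take an explicit linear combination of them, show it is divisible by the Hasse invariant, and identify the quotient as the desired weight-one form. Concretely, I would first apply Theorem~\ref{thm: lifting to weight p by blgg} for each of the $2^{|\mathbb{S}|}$ tuples $\gamma=(\gamma_v)_{v|p}$ with $\gamma_v\in\{\alpha_{1,v},\alpha_{2,v}\}$. The unramified, regular hypothesis on $\rhobar|_{G_{F_v}}$ at each $v|p$ provides the diagonal form required by that theorem, and the output is a mod~$p$ Hilbert eigenform $f_\gamma\in M_p(\Gamma_1(N),\Fpbar)$ with $\rhobar_{f_\gamma}\cong\rhobar$ and $T_v f_\gamma=\gamma_v f_\gamma$ for each $v|p$, where $N$ is a fixed multiple of the Artin conductor of $\rhobar$ coprime to $p$. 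Distinctness $\alpha_{1,v}\neq\alpha_{2,v}$ ensures the $f_\gamma$ are pairwise linearly independent.

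Next I would analyse each Hecke identity $T_v f_\gamma=\gamma_v f_\gamma$ over the ordinary locus. On $\Xbar^{ord}$, the $\mathcal{O}_F$-module $A[v]$ has exactly two stable rank-one subgroups -- the connected piece $\ker(\Frob_A|_{A[v]})$ cut out by the Frobenius section $s$, and its \'etale counterpart -- so independent choices at each $v|p$ yield $2^{|\mathbb{S}|}$ sections $\Xbar^{ord}\to\Ybar^{ord}$. Decomposing the correspondence $(\pi_1,\pi_2)$ via these sections, and using $\pi_2=\pi_1\circ w$, gives at each $v$ a local identity of the schematic form
\[
\gamma_v\,f_\gamma \;=\; U_v f_\gamma \;+\; A_v^{p-1}\,V_v f_\gamma,
\]
where $A_v$ denotes the partial Hasse invariant at $v$ and $U_v,V_v$ are the Frobenius- and Verschiebung-type operators attached to the two sections.

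Following Gross~\cite{MR1074305}, I would then form a linear combination
\[
g \;=\; \sum_\gamma c_\gamma\, f_\gamma \;\in\; M_p(\Gamma_1(N),\Fpbar),
\]
with coefficients $c_\gamma=\prod_{v|p}\pm(\alpha_{1,v}-\alpha_{2,v})^{-1}$ and signs dictated by inclusion--exclusion. Applying this inclusion--exclusion simultaneously to the identities above should cause the $U_v$-contributions to cancel in pairs between $\gamma$'s differing at a single place $v$, leaving $g$ divisible by each partial $A_v$ and hence by the total parallel-weight Hasse invariant $H$ on $\Xbar^{ord}$. The distinctness hypothesis is precisely what keeps the denominators nonzero and prevents $g$ from being identically zero.

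A priori $h:=g/H$ is only a section of $\underline{\omega}$ over $\Xbar^{ord}$. To extend it across the non-ordinary locus I would follow the hint in the introduction and express $h^p\in H^0(\Xbar,\underline{\omega}^p)$ explicitly as a polynomial in the $f_\gamma$, making its regularity on all of $\Xbar$ directly visible; since $\Xbar$ is normal and $\underline{\omega}$ is invertible, the existence of a regular $p$-th power forces $h$ itself to be regular, and Koecher then extends it to $\widetilde{\Xbar}$. Hecke eigenvalues of $h$ away from $p$ coincide with those of any $f_\gamma$, giving $\rhobar_h\cong\rhobar$, and the level bound is inherited from Theorem~\ref{thm: lifting to weight p by blgg}. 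The main obstacle I foresee is the geometric step of the previous paragraph: pinning down the precise $v$-local identity and arranging the inclusion--exclusion to obtain divisibility by each $A_v$ \emph{separately} (rather than only by some power of the total $H$), which is what forces $h$ to have parallel weight exactly one. The explicit calculation of $h^p$ is what makes the extension across the non-ordinary locus rigorous, and accounts for the paper's emphasis on raising the quotient to the $p$-th power.
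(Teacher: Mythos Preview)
Your broad outline --- produce the $2^{|\mathbb S|}$ weight-$p$ eigenforms from Theorem~\ref{thm: lifting to weight p by blgg}, take a signed linear combination, divide by the Hasse invariant, and argue that the quotient extends --- matches the paper. The mechanism you propose for the divisibility step, however, is not the one the paper uses, and as written it does not quite work.

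Your place-by-place identity $\gamma_v f_\gamma = U_v f_\gamma + A_v^{p-1} V_v f_\gamma$ does not type-check in the Hilbert setting. If $A_v$ denotes the product of the partial Hasse invariants attached to embeddings lying over $v$, its weight is already $(p-1)$ at those embeddings (and $0$ elsewhere), so $A_v^{p-1}$ has the wrong weight; more seriously, any decomposition of $T_v$ over $\Xbar^{ord}$ will produce pieces of non-parallel weight, so an inclusion--exclusion aimed at divisibility by each $A_v$ \emph{separately} forces you into the non-parallel-weight theory, which you have not set up. The paper avoids this entirely: it works only with the \emph{total} parallel-weight Hasse invariant $\mathbf h$ and never invokes partial Hasse invariants.

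The paper's actual argument uses \emph{two} linear combinations,
\[
f=\sum_{I\subset\mathbb S}(-1)^{|I|}\gamma_I f_I,\qquad g=\sum_{I\subset\mathbb S}(-1)^{|I|}f_I,
\]
(so no denominators $(\alpha_{1,v}-\alpha_{2,v})^{-1}$), and compares them not on $\Xbar^{ord}$ but on $\widetilde Y$: a direct $q$-expansion computation at the cusps $Ta_{\mathfrak a,c}$ shows that $\pi_1^\ast f-\mathrm{pr}^\ast\pi_2^\ast g$ is divisible by $p$ there. Reducing mod $p$ and pulling back along $w\circ s$ gives an identity on $\widetilde\Xbar$ which, after a short calculation with the Verschiebung, becomes $V(\bar f)=V(\mathbf h)\,\bar g$. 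This is the ``$p$-th power'' calculation alluded to in the introduction: locally $\mathrm{Fr}^\ast(F/\lambda)=G$, so the Weil divisor of $F/\lambda$ is $1/p$ times that of $G$, hence effective, and $\bar f/\mathbf h$ extends. In particular the regular section whose existence certifies the extension is the \emph{second} combination $\bar g$, not a polynomial in the $f_\gamma$ built from your single combination.

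So the gap in your proposal is precisely at the step you flagged: replace the speculative partial-Hasse decomposition by the $q$-expansion comparison of $f$ and $g$ on $\widetilde Y$, followed by the $V(\bar f)=V(\mathbf h)\bar g$ identity.
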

  \begin{proof}
    For each $v\in \mathbb{S}$, let the eigenvalues of $\rhobar(\Frob_v)$ be
    $\gamma_{v,1}\ne\gamma_{v,2}$. Let $\fN$ denote the Artin
    conductor of $\rhobar$, and $N>3$ an integer prime to $p$ and divisible by $\fN$. By Theorem \ref{thm: lifting to weight p
      by blgg}, we see that for each subset $I\subset S$, there is a
    mod $p$ Hilbert modular eigenform $f_I$ of weight $p$ and level
    $\Gamma_1(N)$ such that $\rhobar_{f_I}\cong\rhobar$, and for each prime
    $v\in \mathbb{S}$, we have $T_vf=\gamma_{v,1}f$ if $v\in I$, and
    $T_vf=\gamma_{v,2}f$, otherwise. Since $\rhobar_{f_I}\cong\rhobar$,
    we see that for each prime $\gerl \nmid Np$ of $F$, the $f_I$ are
    eigenvectors for $T_\gerl$ with eigenvalues $\lambda_\gerl$ which are
    independent of $I$. By a standard argument, using Proposition 2.3
    of \cite{MR507462}, we can furthermore assume (at the possible
    cost of passing to forms of level $N^2$) that for each prime
    $\gerl|N$, we have $T_\gerl f_I=0$ for all $I$. 

    We can and do assume that each $f_I$ is normalised, in the sense
    that (in the notation of \cite{MR507462}) $c(\cO_F,f_I)=1$. For
    any $I\subset \mathbb{S}$, let $\gamma_I=\Pi_{v \in I}
    \gamma_{v,1} \Pi_{v \not\in I} \gamma_{v,2}$; this is   the $T_p$-eigenvalue of $f_I$. Set
    \[
    f=\sum_{I\subset S}(-1)^{|I|}\gamma_I f_I,
    \]
    \[
    g=\sum_{I\subset S}(-1)^{|I|}f_I.
    \]

We begin with a Lemma.

\begin{lemma}\label{Lemma: q-expansion} The section $\pi_1^\ast f-{\rm pr}^\ast \pi_2^\ast g$ of $\underline{\omega}^p$ on $\widetilde{Y}$ has $q$-expansion divisible by $p$ at every cusp of the form $Ta_{\gera,c}$.
\end{lemma}

\begin{proof} We let $\eta$ denote a generator of the sheaf $\underline{\omega}$ on the base of $Ta_\gera$ or $Ta_{\gera,c}$. We first remark that by \cite[(2.23)]{MR507462}, if $h$ is a normalized Hilbert modular eigenform of
  parallel weight $k$, and $h(Ta_\gera)=\sum_{\xi \in (\gera^{-1})^+}
  c_\xi q^\xi\eta^k$, then $c_\xi=c(\xi\gera,h)$ is the eigenvalue of the $T_{\xi\gera}$ operator on $h$, for all $\xi\in (\gera^{-1})^+$.

 Write $f(Ta_{\gera})=\sum_{\xi \in (\gera^{-1})^+} a_\xi(\gera) q^\xi \eta^p$ and $g(Ta_{\gera})=\sum_{\xi \in (\gera^{-1})^+} b_\xi(\gera) q^\xi \eta^p$. We have:
\[
\pi_1^\ast f(Ta_{\gera,c})=f(Ta_{\gera})=\sum_{\xi \in (\gera^{-1})^+} a_\xi(\gera) q^\xi \eta^p,
\]
\[
{\rm  pr}^\ast \pi_2^\ast g(Ta_{\gera,c})={\rm pr}^\ast g(Ta_{p\gera})=\sum_{\xi \in p^{-1}(\gera^{-1})^+} b_\xi(p\gera) q^\xi \eta^p.
\]
It is therefore enough to show that $b_\xi(p\gera)=0$ if $\xi \in
p^{-1}(\gera^{-1})^+-(\gera^{-1})^+$, and that $a_\xi(\gera)\equiv
b_\xi(p\gera)\ {\rm mod}\ p$ for $\xi \in (\gera^{-1})^+$. 

For the first statement, let $v\in \mathbb{S}$ be such that $v$ does not divide $\xi p\gera$. Then we can write 
\[
b_\xi(p\gera)=\sum_{I\subset \mathbb{S}} (-1)^{|I|} c(\xi p\gera,f_I)=\sum_{v \not\in I}(-1)^{|I|}c(\xi p\gera,f_I) -\sum_{v \in I} (-1)^{|I|}c(\xi p\gera,f_I)=0,
\]
using $c(\xi\p\gera,f_I)=c(\xi p\gera,f_{I \cup \{v\}})$, since $v$
does not divide $\xi p \gera$. 

For the second statement, note firstly that  for $\xi \in (\gera^{-1})^+$ we can write
\[
b_\xi(p\gera)=\sum_{I \subset \mathbb{S}} (-1)^{|I|}c(\xi
p\gera,f_I),\]\[a_\xi(\gera)= \sum_{I \subset \mathbb{S}}
(-1)^{|I|}\gamma_I c(\xi \gera,f_I).
\]

Now, if $h$ is a Hilbert modular eigenform, then for any integral
ideal $\germ$ of $\calO_F$, we have $c(p\germ,h)\equiv c((p),h)
c(\germ,h)\ {\rm mod}\ p$. Since for any $I \subset \mathbb{S}$,  we
have $c((p),f_I)=\gamma_I$, the result follows.
\end{proof}
For any section $h \in H^0(\widetilde{X},\underline{\omega}^k)$, we denote its image in $H^0(\widetilde{\Xbar}, \underline{\omega}^k)$ by $\bar{h}$.
\begin{cor}\label{Corollary: equality of section on Xbar} We have the following equality of sections of $\underline{\omega}^p$ on $\widetilde{\Xbar}$:
\[
s^\ast \pi_2^\ast \bar{f}=s^\ast w^\ast {\rm pr}^\ast \pi_2^\ast \bar{g}.
\]
\end{cor}

\begin{proof} Reducing the equation in Lemma \ref{Lemma: q-expansion}
  mod $p$, we obtain an equality of sections on every irreducible
  component of $\widetilde{\Ybar}$ which contains the reduction of a
  cusp of the form $Ta_{\gera,c}$.  These are exactly the irreducible
  components of $ws(\widetilde{\Xbar})$, since
  $w(Ta_{\gera,c})=(Ta_{p\gera},<\!\zeta\!>)=s(Ta_{p\gera})$ (where
  $<\!\zeta\!>$ is the $\calO_F$-module generated by a $p$-th root of
  unity $\zeta$). Pulling back under $w\circ s$, we obtain the desired equality on $\widetilde{\Xbar}$.
\end{proof}

For any scheme $Z$ over $\kappa$, let ${\rm Fr}: Z  \arr Z^{(p)}$ denote the relative Frobenius morphism.  Since $\widetilde{\Xbar}$ has a model over $\FF_p$, we have $\widetilde{\Xbar}^{(p)}=\widetilde{\Xbar}$. For any non-negative integer $k$, we define a morphism 
\[
V: H^0(\widetilde{\Xbar},\underline{\omega}^k) \rightarrow H^0(\widetilde{\Xbar},\underline{\omega}^{kp}) 
\]
as follows: choose a trivialization $\{(U_i,\eta_i)\}$ for $\underline{\omega}$ on $\widetilde{\Xbar}$. Let $f\in H^0(\widetilde{\Xbar},\underline{\omega}^k)$ be given by $f_i \eta_i^k$ on $U_i$. Then, there is a unique section $V(f)$ in $H^0(\widetilde{\Xbar},\underline{\omega}^{kp})$ whose restriction to $U_i$ is ${\rm Fr}^\ast (f_i) \eta_i^{kp}$. 

Calculating on points, we see easily that for $\widetilde{\Xbar}$, we
have $\pi_2\circ s={\rm Fr}$. Let $\bf{h}$ denote the Hasse invariant
of parallel weight $p-1$. It can be defined as follows: let $U$ be an open subset of $\Xbar$ over which $\omega$ is trivial, and let $A_U$ denote the universal abelian scheme over $U$. Let  $\eta$ be a non vanishing section of $\omega$ on $U$; it can be thought of as a section of $\Omega_{A_U/U}$. We let $\eta^{(p)}$ denote the induced section of $\Omega_{A_U^{(p)}/U}$. Let ${\rm Ver}: A_U^{(p)} \rightarrow A_U$ be the Verschiebung morphism. Then, there is a unique $\lambda \in \calO_{\Xbar}(U)$, such that  ${\rm Ver}^\ast \eta=\lambda \eta^{(p)}$. We define a section of $\omega^{p-1}$ on $U$ via 
\[
{\bf h}_{U,\eta}:=\lambda\eta^{p-1}.
\]
It is easy to see that there is a unique section of $\omega^{p-1}$ on $\Xbar$, denoted ${\bf h}$, such that ${\bf h}_{|_U}={\bf h}_{U,\eta}$ for any choice of $(U,\eta)$ as above. See \cite[\S 7.11]{AG} for an equivalent construction.

\begin{prop} We have $V(\bar{f})=V({\bf h})\bar{g}$ as sections of $\underline{\omega}^{p^2}$ on $\Xbar$. Furthermore, $\bar{f}$ is divisible by ${\bf h}$, and $\bar{f}/{\bf h}$ is a mod $p$ Hilbert modular form of parallel weight one defined over $\kappa$.
\end{prop}

\begin{proof} Let $U$ be an open subset of $\Xbar$ over which $\underline{\omega}$ is trivializable, and $\eta$ a non-vanishing section of $\underline{\omega}$ over $U$. We claim that if ${\bf h}=\lambda \eta^{p-1}$ on $U$, then
\[
\lambda s^\ast \pi_2^\ast \eta= s^\ast w^\ast {\rm pr}^\ast \pi_2^\ast \eta.
\]
Evaluating both sides at a point corresponding to $\uA$, we need to show that for the natural projection 
\begin{eqnarray}\label{Equation: Verschiebung projection} 
{\rm pr}: A/{{\rm Ker}({\rm Frob}_A)} \rightarrow A/A[p] \cong A,
\end{eqnarray}
we have ${\rm pr}^\ast \eta=\lambda \eta^{(p)}$, which follows from the definition of the Hasse invariant, since ${\rm pr}$ is the Verschiebung morphism of $A$. 

Now, writing $\bar{f}=F \eta^p$ and $\bar{g}=G\eta^p$ on $U$, and using the above claim, over $U$, we can write 
\[
\lambda^p s^\ast \pi_2^\ast \bar{f}=(s^\ast \pi_2^\ast F)(\lambda^p s^\ast \pi_2^\ast \eta^p)=\Fr^\ast(F)( s^\ast w^\ast {\rm pr}^\ast \pi_2^\ast \eta^p).
\]
On the other hand, we have
\[
\lambda^ps^\ast w^\ast {\rm pr}^\ast \pi_2^\ast \bar{g}=\lambda^p(s^\ast w^\ast \pi_2^\ast G) (s^\ast w^\ast {\rm pr}^\ast \pi_2^\ast \eta^{p})=\lambda^pG(s^\ast w^\ast {\rm pr}^\ast \pi_2^\ast \eta^{p}).
\]
At a point corresponding to an {\it ordinary} abelian variety $A$, the section $s^\ast w^\ast {\rm pr}^\ast \pi_2^\ast \eta$ specializes to  $\pr^\ast \eta$, where $\pr$ is as in (\ref{Equation: Verschiebung projection}), and is, hence, non vanishing. Corollary \ref{Corollary: equality of section on Xbar} now implies  that over $\Xbar^{ord} \cap U$ we have
\[
{\rm Fr}^\ast(F)=\lambda^p G={\rm
  Fr^\ast}(\lambda)G,\]
the last equality because $\bf h$ is defined on a model of  $\Xbar$ over $\FF_p$.  Running over a trivializing open covering of $\Xbar$ for $\omega$, we conclude that $V(\bar{f})=V({\bf
  h})\bar{g}$ on $\Xbar^{ord}$. Since $\Xbar^{ord}$ is Zariski dense in $\Xbar$, it follows that  
\[
 V(\bar{f})=V({\bf h})\bar{g}
 \] 
 as sections of $\omega^{p^2}$ over $\Xbar$. 

Viewing $F/\lambda$ as a function on the ordinary part of $U$, we need
to show that it extends to all of $U$. Since $U$ is smooth over
$\kappa$,  it is enough to show that the Weil divisor of $F/\lambda$
is effective. But the coefficients appearing in that divisor are  the
coefficients of the Weil divisor of $G$ multiplied by $p$. Since $G$
has an effective Weil divisor, so does $F/\lambda$, and hence $F$ is
divisible by $\lambda$ on $U$. Repeating this argument over an open
covering of $\widetilde{\Xbar}$, we obtain that $\bar{f}$ is divisible
by $\bf h$,  and $\bar{f}/{\bf h}$ is a mod $p$ Hilbert modular form
of parallel weight one defined over $\kappa$, as required.
\end{proof}

We can now finish the proof of  Theorem \ref{thm: main result}. The desired mod $p$ Hilbert modular form of  parallel weight one $h$ is  $\bar{f}/{\bf h}$. Since ${\bf h}$ has $q$-expansion $1$ at all unramified cusps, it follows that $h$ satisfies the desired assumptions.
\end{proof}

\section{Serre's Conjecture implies Artin's Conjecture}\label{sec:
  Artin}\subsection{}In this final section we generalise the arguments of
\cite{MR1434905} to show that for a fixed totally real field $F$, the
weak form of Serre's conjecture for $F$ implies the strong form of
Artin's conjecture for two-dimensional totally odd representations
over $F$. To
be precise, the weak version of Serre's conjecture that we have in
mind is the following (cf. Conjecture 1.1 of \cite{bib:BDJ}, where it
is described as a folklore conjecture).
\begin{conj}\label{conj:Serre}
  Suppose that $\rhobar:G_F\to\GL_2(\Fpbar)$ is continuous,
  irreducible and totally odd. Then $\rhobar\cong\rhobar_f$ for some
  Hilbert modular eigenform $f$.
\end{conj}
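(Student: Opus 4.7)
The plan is to adapt the Khare--Wintenberger strategy, which resolved the analogous conjecture for $F=\QQ$, to the totally real setting. The overall structure would be an elaborate induction on a complexity invariant $c(\rhobar,p)$ built from the residue characteristic, the Artin conductor of $\rhobar$, and the minimal weight in which $\rhobar$ is expected to arise. The inductive machinery requires three families of ingredients working in tandem: potential modularity theorems (to move $\rhobar$ to a setting where one already knows modularity), modularity lifting theorems (to promote residual modularity to characteristic zero along a compatible system), and level/weight reduction results (to shrink the complexity invariant so the induction can bite).

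As the opening move I would apply Taylor's potential automorphy theorems in their current strongest form (as in \cite{BLGGT}) to produce a totally real Galois extension $F'/F$, linearly disjoint from $\overline{F}^{\ker\rhobar}(\zeta_p)$, such that $\rhobar|_{G_{F'}}$ is modular. Embedding the resulting Hilbert modular eigenform over $F'$ into a compatible system $\{\rho_\ell\}_{\ell}$ of $\ell$-adic representations and varying $\ell$, I would seek an auxiliary prime $\ell'$ for which $\overline{\rho_{\ell'}}|_{G_F}$ has strictly smaller complexity than $\rhobar$ (for example, residually reducible, dihedral, or of smaller conductor). The inductive hypothesis then supplies modularity of $\overline{\rho_{\ell'}}|_{G_F}$, a modularity lifting theorem at $\ell'$ promotes this to automorphy of $\rho_{\ell'}|_{G_F}$, and a second modularity lifting step at $p$ transfers automorphy back to $\rhobar$. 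Weight reduction at each stage would rely on the Serre weight results of \cite{gee08serrewts} and \cite{blgg11-serre-weights-for-U2}, while level reduction would use the Hilbert modular analogue of Ribet's theorem. The parallel weight one case, which lies outside the reach of standard lifting theorems because mod $p$ weight one forms need not lift, is the one place where Theorem \ref{thm: main result} of the present paper would be essential, since it produces precisely the required weight one forms from unramified residual data.

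The hard part will be executing the induction itself. Over $\QQ$ the Khare--Wintenberger argument is built on several features that are special to the field: the existence of a rich supply of auxiliary primes where residual images become small, the small initial discriminants that allow the base case to be reached (via $p=2$ and $p=3$), and, most critically, the ability to descend automorphy through \emph{solvable} extensions, which is the only regime in which base change for $\GL_2$ is unconditionally available. Over a general totally real field $F$ none of these inputs is available in the required form; in particular Taylor's potential modularity extension $F'/F$ is generically non-solvable, and solvable base change cannot be used to descend the resulting automorphy from $F'$ back to $F$. This is precisely where the natural strategy stalls.

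The main obstacle, therefore, is the non-solvability of potential modularity extensions: a complete proof would appear to require either a genuine refinement of potential modularity producing a solvable $F'/F$, or else a new descent technique that works for non-solvable extensions, neither of which is presently in reach. It is for this reason that Conjecture \ref{conj:Serre} remains open in general, and why the paper takes Conjecture \ref{conj:Serre} as a hypothesis rather than attempting to prove it: the results of Sections \ref{sec: BLGG}--\ref{section: weight one} together with \cite{MR1434905}-style arguments will then deduce the strong Artin conjecture for totally odd two-dimensional $\rho$ over $F$ conditionally on Conjecture \ref{conj:Serre}.
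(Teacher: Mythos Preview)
The statement you were asked to prove is a \emph{conjecture}, and the paper does not prove it; it is introduced precisely so that it can be taken as a hypothesis in Theorem~\ref{thm: Serre implies Artin}. You recognise this yourself in your final paragraph, so in that sense your assessment is correct and there is nothing in the paper to compare your proposal against.

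Two small comments on the content of your sketch. First, your diagnosis of the obstruction is right in spirit: the Khare--Wintenberger machine relies crucially on descending automorphy through solvable extensions, and over a general totally real $F$ the potential automorphy extensions supplied by \cite{BLGGT} are not solvable, so the descent step has no available input. Second, your claim that Theorem~\ref{thm: main result} would be ``essential'' inside such an induction overstates its role: in the paper the weight one companion forms theorem is used \emph{after} assuming Conjecture~\ref{conj:Serre}, as the bridge from residual modularity to genuine weight one automorphy (and hence to Artin), not as a step in any inductive proof of Conjecture~\ref{conj:Serre} itself. The Khare--Wintenberger induction over $\QQ$ does not pass through weight one forms in the way you suggest.
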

Meanwhile, we have the following strong form of Artin's conjecture.
\begin{conj}
  \label{conj:Artin}Suppose that $\rho:G_F\to\GL_2(\C)$ is continuous,
  irreducible and totally odd. Then $\rho\cong\rho_f$ for the some
  Hilbert modular eigenform $f$ (necessarily of parallel weight one).
\end{conj}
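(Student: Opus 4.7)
The plan is to adapt the argument of Buzzard \cite{MR1434905} from $\Q$ to the totally real setting, with Theorem \ref{thm: main result} as the essential new input. Assume Conjecture \ref{conj:Serre}, and let $\rho : G_F \to \GL_2(\C)$ be irreducible, totally odd, and continuous. Since $\rho$ is an Artin representation, its projective image is either dihedral or one of $A_4$, $S_4$, $A_5$. The dihedral case is classical: $\rho$ is induced from a finite-order character of a quadratic extension of $F$, so modularity follows from cyclic base change for $\GL_2$. I therefore assume the projective image is non-dihedral.

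Choose a rational prime $p > 5$ that is unramified in both $F$ and the fixed field $L$ of $\ker \rho$, and is coprime to the order of $\rho(G_F)$. Fix a number field $E \subset \overline{\Q}$ containing the traces of $\rho$ at Frobenius elements, and a prime $\gerp$ of $E$ above $p$; write $\rhobar$ for the reduction of $\rho$ modulo $\gerp$. For such $p$, $\rhobar$ is irreducible (the projective image type is preserved under reduction), $\rhobar|_{G_{F(\zeta_p)}}$ is irreducible, the exceptional cases $p = 3,5$ do not apply, $\rhobar|_{G_{F_v}}$ is unramified at each $v \mid p$, and the two eigenvalues of $\rhobar(\Frob_v)$ are distinct (being distinct roots of unity in characteristic zero). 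Conjecture \ref{conj:Serre} gives that $\rhobar$ is modular, and Theorem \ref{thm: main result} then produces a mod $p$ Hilbert modular eigenform $h$ of parallel weight one and level prime to $p$ with $\rhobar_h \cong \rhobar$. Using the flexibility of Theorem \ref{thm: lifting to weight p by blgg} in selecting Frobenius eigenvalues, I arrange that the $T_v$-eigenvalue of $h$ at each $v \mid p$ is the reduction mod $\gerp$ of a prescribed eigenvalue of $\rho(\Frob_v)$.

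Next I would invoke a Deligne--Serre-type lifting lemma for parallel weight one Hilbert modular eigenforms: $h$ is the mod $\gerp$ reduction of a characteristic-zero cuspidal Hilbert modular eigenform $H$ of parallel weight one, whose Hecke eigenvalues lift those of $h$. The associated Galois representation $\rho_H : G_F \to \GL_2(\C)$ is then an irreducible, totally odd Artin representation reducing to $\rhobar$, such that the two eigenvalues of $\rho_H(\Frob_v)$ reduce mod $\gerp$ to the prescribed eigenvalues of $\rho(\Frob_v)$. To identify $\rho$ with $\rho_H$, observe that both are two-dimensional odd Artin representations whose trace fields lie in a fixed cyclotomic field of degree bounded only by the projective image type. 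For $p$ large enough, the congruence $\tr \rho(\Frob_\ell) \equiv \tr \rho_H(\Frob_\ell) \pmod{\gerp}$ at primes $\ell \nmid Np$ forces equality of traces, and Chebotarev density yields $\rho \cong \rho_H$. Hence $\rho$ is modular via $H$.

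The principal obstacle is the matching step: one must show that the Deligne--Serre procedure does not produce a spurious Artin lift of $\rhobar$ distinct from $\rho$. The freedom in Theorem \ref{thm: main result} to specify $T_v$-eigenvalues at $v \mid p$ is essential here, since it forces the local behavior of $\rho_H$ at each $v \mid p$ to agree with that of $\rho$. Combined with the non-dihedral assumption and a sufficiently large choice of $p$, this argument --- carried out in detail by Buzzard over $\Q$ in \cite{MR1434905} --- sufficiently restricts the pool of odd Artin lifts of $\rhobar$ satisfying the imposed local conditions to pin down $\rho_H \cong \rho$.
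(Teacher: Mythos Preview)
Your proposal has a genuine gap and also diverges substantially from the paper's argument.

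\textbf{The gap.} You assert that for your chosen prime $p$ and each $v\mid p$, ``the two eigenvalues of $\rhobar(\Frob_v)$ are distinct (being distinct roots of unity in characteristic zero).'' This is not justified: there is no reason at all that $\rho(\Frob_v)$ should have distinct eigenvalues --- it could perfectly well be scalar. Nothing in your list of conditions on $p$ (unramified in $F$ and $L$, coprime to $|\rho(G_F)|$, $p>5$) forces $\proj\rho(\Frob_v)\ne 1$. The paper treats exactly this point as the delicate one: it proves, via a Cebotarev argument using complex conjugation in the Galois closure of $L/\Q$, that there are infinitely many rational primes $p$ such that no $v\mid p$ splits completely in the fixed field of $\ker\proj\rho$, i.e.\ $\rho(\Frob_v)$ is non-scalar for every $v\mid p$. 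Without this, Theorem~\ref{thm: main result} does not apply.

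\textbf{The different strategy.} Even granting the input to Theorem~\ref{thm: main result}, your single-prime approach requires you to identify the Deligne--Serre lift $\rho_H$ with $\rho$. Your sketch (``trace fields lie in a fixed cyclotomic field of degree bounded only by the projective image type'') is not an argument: you do not know the projective image of $\rho_H$ in advance, only that $\rho_H$ reduces to $\rhobar$, and there can be several non-isomorphic Artin lifts of a given $\rhobar$. Pinning down local behaviour at the places $v\mid p$ does not obviously suffice. The paper avoids this identification problem entirely by a pigeonhole argument: it produces, for a \emph{fixed} level $N$ and \emph{infinitely many} primes $p$, a mod $p$ weight-one eigenform giving $\rhobar_p$, lifts each to $S_1(\Gamma_1(N),\C)$ (possible for almost all $p$ since $H^1(X,\underline{\omega})$ is $p$-torsion-free for almost all $p$), and then uses finite-dimensionality of $S_1(\Gamma_1(N),\C)$ to find a single eigenform $f$ that works for infinitely many $p$, whence $\rho\cong\rho_f$. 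This is cleaner than your proposed matching and does not need the dihedral/non-dihedral case split.
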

In order to show that Conjecture \ref{conj:Serre} implies Conjecture
\ref{conj:Artin}, we follow the proof of Proposition 1 of
\cite{MR1434905}, using Theorem \ref{thm: main result} in place of the
results of Gross and Coleman--Voloch used in \cite{MR1434905}. The
argument is slightly more involved than in \cite{MR1434905}, because
we have to be careful to show that the $p$-distinguishedness
hypothesis in Theorem \ref{thm: main result} is satisfied.
\begin{thm}\label{thm: Serre implies Artin}
  Fix a totally real field $F$. Then Conjecture \ref{conj:Serre}
  implies Conjecture \ref{conj:Artin}.
\end{thm}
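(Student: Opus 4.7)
The strategy is to follow the pigeonhole argument of Buzzard--Taylor \cite{MR1434905}, using Theorem \ref{thm: main result} in place of the Gross--Coleman--Voloch input over $\Q$. Since $\rho:G_F\to\GL_2(\C)$ is a continuous representation of the profinite group $G_F$, its image is finite; thus $\rho$ takes values in $\GL_2(\cO_E)$ for some number field $E$ and is unramified outside a finite set $\Sigma$ of primes of $F$. For a rational prime $p$ and a choice of place of $E$ above $p$, let $\rhobar_p:G_F\to\GL_2(\Fpbar)$ denote the corresponding reduction.

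The first step is to exhibit an infinite set of primes $p$ satisfying every hypothesis of Theorem \ref{thm: main result}. Primes ramified in $F$, primes in $\Sigma$, primes at which $\rhobar_p$ or $\rhobar_p|_{G_{F(\zeta_p)}}$ becomes reducible, and the exceptional small-image cases when $p=3,5$ together exclude only finitely many $p$. The delicate requirement, explicitly singled out in the introduction, is $p$-distinguishedness at \emph{every} place $v\mid p$ simultaneously. Since $\rho$ is irreducible with finite image, the projective image $\bar G:=\rho(G_F)/Z$, where $Z$ is the subgroup of scalars in $\rho(G_F)$, is nontrivial; the image of $\Frob_v$ in $\bar G$ is nontrivial precisely when $\rho(\Frob_v)$ has distinct eigenvalues (which then remain distinct in $\Fpbar$ for all but finitely many $p$). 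A Chebotarev density argument in the Galois closure over $\Q$ of the number field cut out by $\rho$ produces a set of primes of positive density such that $\Frob_v$ has nontrivial image in $\bar G$ for every $v\mid p$.

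Fix such a prime $p$. By Conjecture \ref{conj:Serre}, $\rhobar_p$ is modular, and Theorem \ref{thm: main result} yields a mod $p$ parallel weight one Hilbert modular eigenform $h_p$ of level bounded by an integer $N$ depending only on the conductor of $\rho$, with $\rhobar_{h_p}\cong\rhobar_p$. Applying Theorem \ref{thm: main result} to both choices of Frobenius eigenvalue $\gamma_{v,1},\gamma_{v,2}$ at each $v\mid p$ produces mod $p$ companions with distinct $U_v$-eigenvalues. The crucial further step is to lift this data to a classical characteristic zero parallel weight one Hilbert modular eigenform $F_p$ whose Hecke eigenvalues at primes $\gerl\nmid Np$ agree with those of $h_p$; this plays the role of the Coleman--Voloch lifting used in \cite{MR1434905}, and should proceed via a classicality theorem for overconvergent parallel weight one Hilbert modular forms admitting two distinct finite-slope $U_v$-eigenvectors at each $v\mid p$. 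Standard constructions then attach to $F_p$ a complex Artin representation $\rho_{F_p}$ with $\rhobar_{F_p}\cong\rhobar_p$.

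To conclude, the space of classical parallel weight one Hilbert modular forms of level at most $N$ is finite-dimensional, so only finitely many distinct systems of Hecke eigenvalues arise among the $F_p$ as $p$ varies in our infinite set. By pigeonhole, some fixed eigenform $F$ satisfies $F=F_p$ for infinitely many $p$; for each prime $\gerl$ outside a fixed finite set, the trace $\tr\rho_F(\Frob_\gerl)$ is then congruent to $\tr\rho(\Frob_\gerl)$ modulo infinitely many primes, forcing equality. Chebotarev density then gives $\rho_F\cong\rho$, establishing Conjecture \ref{conj:Artin}. The main obstacle is the classicality step, since mod $p$ parallel weight one Hilbert modular forms do not lift automatically to characteristic zero; all other steps reduce to standard density computations, finite-dimensionality, or direct applications of Theorem \ref{thm: main result}.
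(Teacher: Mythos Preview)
Your overall strategy matches the paper's, but you have left the lifting step as an unresolved ``main obstacle,'' and the route you propose for it---a classicality theorem for overconvergent parallel weight one forms with distinct $U_v$-eigenvalues at each $v\mid p$---is both unnecessary and far harder than what is actually needed. The paper's observation is that, since the level $N$ is fixed while $p$ varies, one may work with the Hilbert modular scheme $X$ over a ring of the form $\Z[1/N]$; then $H^1(X,\underline{\omega})$ is a finitely generated abelian group, hence $p$-torsion-free for all but finitely many $p$. For such $p$ the reduction map $H^0(X,\underline{\omega})\to H^0(\Xbar,\underline{\omega})$ is surjective, and the Deligne--Serre lemma (Lemma 6.11 of \cite{deligne-serre}) lifts the mod $p$ weight one eigenform to a characteristic zero one of the same level. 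Since you are already discarding finitely many primes anyway, this closes the gap with no overconvergent input whatsoever.

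Two smaller points. First, Theorem \ref{thm: main result} produces a single weight one form $h$, not a family indexed by choices of $\gamma_v$; the $2^n$ choices of Frobenius eigenvalue enter only in the auxiliary weight-$p$ construction of Theorem \ref{thm: lifting to weight p by blgg}, and no $U_v$-operators appear in the weight one statement. Once the cohomological lifting above is in place, your detour through companions is not needed. Second, your Chebotarev sketch for simultaneous $p$-distinguishedness at all $v\mid p$ is on the right track but vague; the paper makes it precise by choosing $\Frob_p$ in the conjugacy class of a complex conjugation inside the normal closure over $\Q$ of the field cut out by $\proj\rho$. This class is trivial on the (totally real) normal closure of $F$, forcing $p$ to split completely in $F$, yet nontrivial in the projective image, forcing every $v\mid p$ to be inert in the relevant extension.
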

\begin{proof}Suppose that $\rho:G_F\to\GL_2(\C)$ is continuous,
  irreducible and totally odd. Then $\rho(G_F)$ is finite, so after
  conjugation we may assume that $\rho:G_F\to\GL_2(\cO_K)$, where
  $\cO_K$ is the ring of integers in some number field $K$. We will
  show that there are a fixed integer $N$ and infinitely many rational
  primes $p$ such that for each such $p$, if $\rhobar_p$ denotes the
  reduction of $\rho$ mod $p$ (or rather, modulo a prime of $\cO_K$
  above $p$), then $\rhobar_p$ arises from the reduction mod $p$ of the
  Galois representation associated to an eigenform in
  $S_1(\Gamma_1(N),\C)$, the space of cuspidal Hilbert modular forms of parallel weight one and level $\Gamma_1(N)$ over $\CC$. Since
  $S_1(\Gamma_1(N),\C)$ is finite-dimensional, there are only finitely many
  such eigenforms, so we see that one eigenform $f$ must work for
  infinitely many $p$; but then it is easy to see that
  $\rho\cong\rho_f$, as required.

  Firstly, we claim that it suffices to prove that there is a fixed
  $N$ and infinitely many primes $p$ such that $\rhobar\cong\rhobar_f$
  for some eigenform $f\in S_1(\Gamma_1(N),\Fpbar)$ (using the
  notation of Section \ref{section: weight one}). To see this, note
  that for all but finitely many primes $p$, the finitely generated $\Z$-module
  $H^1(X,\underline{\omega})$ is $p$-torsion free, so that for all but
  finitely many $p$ the reduction map
  $H^0(X,\underline{\omega})\to
  H^0(\Xbar,\underline{\omega})$ is surjective, and the
  Deligne--Serre lemma (Lemma 6.11 of \cite{deligne-serre}) allows us
  to lift from $S_1(\Gamma_1(N),\Fpbar)$ to $S_1(\Gamma_1(N),\C)$.

We are thus reduced to showing that there are infinitely many primes
$p$ for which $\rhobar_p$ satisfies the hypotheses of Theorem
\ref{thm: main result}. Firstly, note that there is at most one prime
$p$ for which $\rho|_{G_{F(\zeta_p)}}$ is reducible, so if we exclude
any such prime, as well as the (finitely many) primes dividing
$\#\rho(G_F)$, the primes which ramify in $F$, and the
primes less than $7$, then $\rhobar_p$ will satisfy the requirements
of the first paragraph of Theorem \ref{thm: main result}.
  
If we also exclude the finite many primes $p$ for which
$\rho|_{G_{F_v}}$ is ramified for some $v|p$, we see that it is enough
to show that there are infinitely many $p$ such that for all $v|p$,
$\rhobar_p(\Frob_v)$ has distinct eigenvalues.

In fact, we claim that it is enough to see that there are infinitely
many $p$ such that for all $v|p$, $\rho(\Frob_v)$ is not scalar. To
see this, suppose that $\rho(\Frob_v)$ is not scalar, but
$\rhobar_p(\Frob_v)$ is scalar. Then it must be the case that the
difference of the eigenvalues of $\rho(\Frob_v)$ is divisible by some
prime above $p$. Now, there are only finitely many non-scalar elements
in $\rho(G_F)$, and for each of these elements there are only finitely
many primes dividing the difference of their eigenvalues, so excluding
this finite set of primes gives the claim.

Let $\proj \rho$ be the projective representation
$\proj\rho:G_F\to\PGL_2(\C)$ obtained from $\rho$. We must show that
there are infinitely many primes $p$ such that for each place $v|p$ of
$F$, $\proj\rho(\Frob_v)\ne 1$. Letting
$L=\overline{F}^{\ker\proj\rho}$, we must show that there are
infinitely many primes $p$ such that no place $v|p$ of $F$ splits
completely in $L$. Let $M$ be the normal closure of $F$ over $\Q$, and
$N$ the normal closure of $L$ over $\Q$. Since $\rho$ is totally odd,
we see that $M$ is totally real and $N$ is totally imaginary. Consider
a complex conjugation $c\in\Gal(N/\Q)$. By the Cebotarev density
theorem there are infinitely many primes $p$ such that $\Frob_p$ is
conjugate to $c$ in $\Gal(N/\Q)$, and it is easy to see that each such
prime splits completely in $M$ and thus in $F$, and that no place $v|p$ of $F$ splits
completely in $L$, as required.
\end{proof}
\bibliographystyle{amsalpha} 

\bibliography{companionformsweightone} 

\

\end{document}